\pgfplotsset{compat=newest}
\newcommand{\NN}{\mathbb{N}}
\newcommand{\RR}{\mathbb{R}}
\newcommand{\CC}{\mathbb{C}}
\newcommand{\ltwor}{L_r^2(Q)}
\newcommand{\ltworr}{L_{1/r}^2(Q)}
\newcommand{\htilde}{\tilde{H}^1(Q)}
\newcommand{\htildeo}{\tilde{H}_0^1(Q)}
\newcommand{\interp}{\Pi}
\newcommand{\mA}{\boldsymbol{\mathsf{A}}}
\newcommand{\mB}{\boldsymbol{\mathsf{B}}}
\newcommand{\mE}{\boldsymbol{\mathsf{E}}}
\newcommand{\bF}{\boldsymbol{F}}
\newcommand{\bu}{\boldsymbol{u}}
\newcommand{\bv}{\boldsymbol{v}}
\newcommand{\bq}{\boldsymbol{q}}
\newcommand{\bphi}{\boldsymbol{\phi}}
\newcommand{\bpsi}{\boldsymbol{\psi}}
\newcommand{\norm}[1]{\left\lVert#1\right\rVert}
\DeclareMathOperator*{\R}{Re}
\newtheorem{theorem}{Theorem}
\newtheorem{lemma}[theorem]{Lemma}
\newtheorem{remark}[theorem]{Remark}
\newtheorem{definition}[theorem]{Definition}
\begin{document}

\title[Convergence analysis of SBFEM]{Convergence analysis of the scaled
boundary finite element method for the Laplace equation}

\thanks{The first author gratefully acknowledges support by the German Research Foundation (DFG) in the Priority Programme SPP 1748 Reliable simulation techniques in solid mechanics under grant number BE6511/1-1. The second author is a member of the INdAM Research group GNCS and his research is partially supported by IMATI/CNR and by PRIN/MIUR. The first and third author gratefully acknowledge the support by Mercator Research Center Ruhr (MERCUR) under grant Pr-2017-0017. We would also like to thank the project partners Prof. Carolin Birk (Universit\"at Duisburg-Essen, Germany) and Prof. Christian Meyer (TU Dortmund, Germany) as well as Professor Gerhard Starke for the fruitful discussions.}

\author{Fleurianne Bertrand }
\address{Humboldt-Universit\"at zu Berlin, Germany, King Abdullah
University of Science and Technology, Saudi Arabia and University of Twente, Enschede, the Netherlands}
\email{fb@math.hu-berlin.de}

\author{Daniele Boffi}
\address{King Abdullah University of Science and Technology (KAUST), Saudi
Arabia and University of Pavia, Italy}

\author{Gonzalo G. de Diego}
\address{Mathematical Institute, University of Oxford, UK}
\email{gonzalezdedi@maths.ox.ac.uk}

\begin{abstract}
	The scaled boundary finite element method (SBFEM) is a relatively recent boundary element method that allows the approximation of solutions to PDEs without the need of a fundamental solution. A theoretical framework for the convergence analysis of SBFEM is proposed here. This is achieved by defining a space of semi-discrete functions and constructing an interpolation operator onto this space. We prove error estimates for this interpolation operator and show that optimal convergence to the solution can be obtained in SBFEM. These theoretical results are backed by a numerical example.
\end{abstract}

\maketitle

\section{Introduction}

The scaled boundary finite element method (SBFEM), first proposed by Song and
Wolf, is a boundary element method that does not require a fundamental
solution. It has proven to be particularly effective for problems with
singularities or posed over unbounded media, see~
\cite{song1997,song1999,wolf2003}. In SBFEM, a semi-analytical (or semi-discrete, as we shall call it
here) solution to a PDE is constructed by transforming the weak formulation of
the PDE into an ODE. Essentially, given a star-shaped domain $\Omega\subset
\RR^n$, a coordinate transformation is performed (the scaled boundary
transformation) in terms of a radial variable and $n-1$ circumferential
variables. Then, an approximate solution is sought in a space of functions
discretized solely in the circumferential direction. The resulting weak
formulation posed over this space is then transformed into an ODE which, under
certain circumstances, can be solved exactly, yielding a semi-analytical
approximation of the solution to the PDE.

The SBFEM has been applied to a wide range of problems that arise in science
and engineering, such as crack propagation~\cite{song2003} and
acoustic-structure interactions~\cite{liu2019}. Moreover, the limitation to
star-shaped domains has been overcome with the development of scaled-boundary
polygon elements, in which the domain is broken into arbitrarily shaped
polygons and shape functions are constructed over these polygons based on
SBFEM~\cite{Ooi2012,Ooi2014,gravenkamp2016}.

The objective of this paper is to introduce a rigorous framework in which the
error of the approximate solution obtained by SBFEM can be estimated. In
particular, the notion of a semi-discrete solution to a PDE is formalized by
defining a space of semi-discrete functions and constructing an interpolation
operator onto this space. Then, given a semi-analytical solution obtained in
the framework of SBFEM, estimates of its error can be obtained by bounding the
interpolation operator's error using C\'ea's lemma. We limit the analysis to
Poisson's equation posed on a circular domain for simplicity; this setting is
appropriate to highlight the main features of our theoretical setting.

The overview of this paper is as follows: in Section~\ref{se:setting} we
describe the continuous problem together with the polar coordinate change of
variables. In Section~\ref{se:semi-discrete} we introduce a
semi-discretization of our problem, where the domain is discretized only in
the angular coordinate. It is shown that the semi-discrete solution converges
optimally to the continuous solution.  Section~\ref{se:sbfem}, making use of
the semidiscretization, transforms the original problem into an ODE.  Finally,
two numerical results reported in Section~\ref{se:num} show that the method
is performing optimally also in the presence of singularities.

\section{Setting of the problem}
\label{se:setting}

Given an angle $\Theta$ in $(0,2\pi)$, we are considering the Poisson problem
in the following circular sector (see Figure~\ref{fig:pacman_domain}):
\begin{equation*}
\Omega := \left\{ (x,y)\in\RR^2 : 0< x^2 + y^2 < 1,\
0<\arctan{\left( \frac{y}{x} \right)}<\Theta \right\}.
\end{equation*}
%
\begin{figure}
\centering
\begin{tikzpicture}[scale=2]
        \draw [dashed,-{Latex[scale=1]}] (0.35,0) arc [radius=0.35, start angle=0, end angle= 225];
        \draw (0,0.35) node [above left] {$\Theta$};
        \draw (1,0) arc [radius=1, start angle=0, end angle= 225];
        \draw (0,0) --(1,0);
        \draw (0,0) --(-0.70710,-0.70710);
\end{tikzpicture}
\caption{Sector of a disk of angle $\Theta$.}
\label{fig:pacman_domain}
\end{figure}
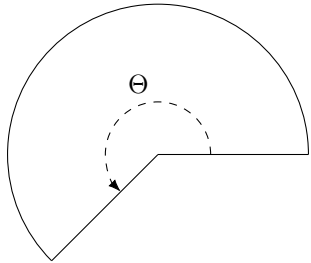
Since we are going to consider a change of variables when defining the scaled
boundary method, we denote with $\hat\bullet$ (with the hat symbol) quantities
defined on $\Omega$ that correspond to quantities $\bullet$ defined on the
reference domain.
Hence our problem reads: find $\hat{u}: \Omega\rightarrow\RR$ such that 
\begin{equation}
\label{eq:poisson}
\aligned
&-\hat{\Delta} \hat{u} = \hat{f} &&\text{in }\Omega\\
&\hat{u} = 0 && \text{on }\partial\Omega,
\endaligned
\end{equation}
where $\hat f\in L^2( \Omega)$ and $\hat{\Delta} = \partial^2_x +
\partial^2_y$ is the Laplace operator in the Cartesian coordinates $(x,y)$.

Let the curved part of the boundary of $\Omega$ be parametrized by the graph
\[
(x_b(\theta),y_b(\theta))=(\cos\theta,\sin\theta)\qquad
\theta\in(0,\Theta)
\]
and define the open rectangle $Q:=(0,1)\times (0,\Theta)$.
We consider the mapping $F:Q \rightarrow \RR^2$ given by
\begin{equation}
\label{eq:sb_map}
F(r,\theta) = r \left( x_b(\theta), y_b(\theta) \right).
\end{equation}

In this particular case, the \textbf{scaled boundary transformation} is given
by the change of variables $(r,\theta) = F^{-1}(x,y)$ for $(x,y)\in \Omega$,
i.e., by the polar coordinate transformation. The Jacobian of $F$ is given by
\[
DF(r,\theta) = \left( \begin{array}{cc}
        \partial_r x & \partial_r y \\
        \partial_\theta x & \partial_\theta y
\end{array} \right) = \left( \begin{array}{cc}
        \cos \theta  & \sin \theta \\
        -r\sin \theta & r \cos \theta
\end{array} \right)
\]
and its determinant is $|DF(r,\theta)| = r$. Since $F$ is differentiable and
$|DF(r,\theta)|$ is invertible in the open set $Q$ we have
\[
DF^{-1}(F(r,\theta)) = \left( \begin{array}{cc}
        \partial_{x} r & \partial_{x} \theta \\
        \partial_{y} r & \partial_{y} \theta
\end{array} \right)
 = \frac{1}{r} \left( \begin{array}{cc}
        r \cos\theta & -\sin\theta \\
        r \sin\theta & \cos\theta
\end{array} \right).
\]
Let $u(r,\theta) = \hat{u}(F(r,\theta))$, then the relation between the
gradient in Cartesian coordinates $\hat\nabla = (\partial_x, \partial_y)^\top$
and the gradient in polar coordinates
$\nabla = (\partial_r, \partial_\theta)^\top$ is given by
\[
\hat{\nabla} \hat{u}(x,y) =
DF^{-1}(x,y) \nabla u(F^{-1}(x,y)) \quad \text{in }\Omega.
\]
Moreover, the solution $\hat u$ of~\eqref{eq:poisson} satisfies
\begin{equation}
\label{eq:change_of_variables}
||\hat{u}||^2_{H^1(\Omega)}=
\int_0^1\int_0^{\Theta}\left(ru^2+r(\partial_r u)^2+
\frac{1}{r}(\partial_\theta u)^2\right)\,\mathrm{d}r\,\mathrm{d}\theta < \infty.
\end{equation}

In order to consider the variational formulation of~\eqref{eq:poisson} in
polar coordinates we have to consider appropriate weighted functional spaces.
While this is pretty straightforward and well understood, we present the
procedure in detail since the notation will be useful for the analysis of the
numerical approximation.

Given a weight function $w(r,\theta)$ in $Q$, we define the weighted Lebesgue
space
\[
L^2_{w}(Q) = \left\{v:Q\to\RR\text{ measurable}:
\int_0^1\int_0^{\Theta}v^2w\,\mathrm{d}r\,\mathrm{d}\theta < \infty\right\}
\]
with inner product
\[
(u,v)_{L^2_{w}(Q)}:=\int_0^1\int_0^{\Theta}uvw\,\mathrm{d}r\,\mathrm{d}\theta.
\]
We will use in particular $w=r$ and $w=1/r$; it is not difficult to see that
we have
$||u||_{L^2_{r}( Q)} \leq ||u||_{L^2( Q)} \leq ||u||_{L^2_{1/r}( Q)}$ for all
$u\in L^2_{1/r}( Q)$. Furthermore, these spaces are complete
\cite{kufner1985}.

The bound~\eqref{eq:change_of_variables} motivates the definition of the
following weighted Sobolev space
\[
\htilde = \left\{v\in\ltwor:||v||_{\ltwor} + ||\partial_r v||_{\ltwor} +
||\partial_\theta u||_{\ltworr} < \infty \right\}
\]
with inner product
\[
(u,v)_{\htilde}:=(u,v)_{\ltwor} + (\partial_r u,\partial_r v)_{\ltwor} +
(\partial_\theta u,\partial_\theta v)_{\ltworr}.
\]
The following lemma shows that $H^1(\Omega)$ and $\htilde$ are isometric.

\begin{lemma}
Let $\Phi:L^2(Q)\to\ltwor$ be defined by $\hat{u}\mapsto\hat{u}\circ F$.
Then the spaces $H^1(\Omega)$ and $\htilde$ are isometric via $\Phi$.
\end{lemma}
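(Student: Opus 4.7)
The plan is to establish three properties of $\Phi$: that it maps $H^1(\Omega)$ into $\tilde{H}^1(Q)$, that it preserves norms, and that it is a bijection. Together these yield the isometry. (I will treat the ``$L^2(Q)$'' in the definition as a typo for $L^2(\Omega)$, since $\hat{u}$ is a function on $\Omega$.)

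First I would prove the norm identity, which is essentially already contained in equation~\eqref{eq:change_of_variables} of the excerpt: setting $u=\hat{u}\circ F$, the right-hand side of~\eqref{eq:change_of_variables} is precisely $\|u\|^2_{\htilde}$ by the definition of the weighted Sobolev space. To establish it rigorously for a general $\hat{u}\in H^1(\Omega)$, I would first verify it on $C^\infty$ functions supported away from the origin, where $F$ is a $C^\infty$ diffeomorphism: the $L^2$ part transforms via the Jacobian $|DF|=r$ to $\int_Q u^2\, r\,\mathrm{d}r\,\mathrm{d}\theta$, and the chain rule $\hat\nabla\hat u = (DF^{-1})^\top\nabla u$ combined with the explicit form of $DF^{-1}$ gives $|\hat\nabla\hat u|^2 = (\partial_r u)^2 + r^{-2}(\partial_\theta u)^2$, which after multiplication by $r$ produces the remaining two terms. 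The general case follows by density.

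Second, I would address bijectivity. Linearity of $\Phi$ is immediate. Injectivity follows because $F:Q\to\Omega\setminus\{(0,0)\}$ is a bijection up to a null set, so $\hat u\circ F = 0$ a.e.\ in $Q$ forces $\hat u = 0$ a.e.\ in $\Omega$. For surjectivity, given $u\in\htilde$ I would define $\hat u := u\circ F^{-1}$ on $\Omega\setminus\{(0,0)\}$ and run the change-of-variables computation in reverse: the identity just proved shows $\|\hat u\|_{H^1(\Omega)} = \|u\|_{\htilde}<\infty$, so $\hat u\in H^1(\Omega)$ and $\Phi(\hat u)=u$.

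The main obstacle will be the density step used to pass the change-of-variables identity from smooth functions to general $H^1$ functions. The difficulty is that $F$ degenerates at $r=0$, so the classical change-of-variables formula applies only on $\Omega\setminus\{(0,0)\}$. Because $\{(0,0)\}$ has $H^1$-capacity zero in $\mathbb{R}^2$, smooth functions vanishing in a neighborhood of the origin are dense in $H^1(\Omega)$; applying the identity on this dense subspace and taking limits in both the strong $H^1(\Omega)$ and $\htilde$ topologies (made legitimate by the identity itself, which shows the two norms match on the dense subspace) yields the result. A symmetric capacity argument on the segment $\{r=0\}\subset\overline{Q}$ handles the reverse direction.
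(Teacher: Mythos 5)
Your strategy is correct in outline but follows a genuinely different route from the paper. The paper never invokes density of smooth functions: it restricts $F$ to the annular subdomains $Q_\rho=(\rho,1)\times(0,\Theta)$ and $\Omega_\rho=\Omega\cap B_\rho^c$, where $F$ is bi-Lipschitz, applies the chain rule for Sobolev functions under bi-Lipschitz maps (citing Ziemer) to get the norm identity on each $\Omega_\rho$, and then lets $\rho\to0$ by monotone convergence; the reverse direction is handled symmetrically by the same exhaustion. Your approach instead proves the identity on smooth functions vanishing near the origin and passes to the limit, which buys you a more elementary core computation (only the classical change of variables for $C^\infty$ functions) and delivers the chain rule for the limit function for free, at the price of two density facts that you assert via capacity but do not prove. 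The first, density in $H^1(\Omega)$ of functions vanishing near the origin, is true but already requires a truncation step plus the logarithmic cutoff (a naive cutoff of an unbounded $H^1$ function does not obviously converge). The second is the more delicate one: for surjectivity you cannot simply ``run the identity in reverse,'' since the identity was established for $\hat u\in H^1(\Omega)$ and membership of $u\circ F^{-1}$ in $H^1(\Omega)$ is exactly what is at stake; you need density in $\htilde$ of smooth functions vanishing near $\{r=0\}$, and the relevant capacity is the \emph{weighted} one (the segment $\{r=0\}$ has positive ordinary $H^1$-capacity in $\RR^2$; it is only the weight $r$ that makes the cutoff $\eta_\epsilon(r)=\log(r/\epsilon^2)/\log(1/\epsilon)$ have vanishing weighted energy), so calling the argument ``symmetric'' papers over a computation that should be displayed. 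Moreover, density of smooth functions in the weighted space $\htilde$ itself is a nontrivial fact about weighted Sobolev spaces that deserves a citation. The paper's exhaustion-plus-monotone-convergence argument sidesteps all of these issues, which is why it is the shorter rigorous path; your version is salvageable but needs those three ingredients made explicit. You are right that the stated domain $L^2(Q)$ of $\Phi$ should read $L^2(\Omega)$.
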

\begin{proof}
Let $\hat{u}\in H^1(\Omega)$ and, for $0<\rho<1$, let $B_\rho$ be the ball of
radius $\rho$ centred at the origin and $B_\rho^c$ its complement. For
$\Omega_\rho = \Omega\cap B_\rho^c$, the map $F:Q_\rho \to \Omega_\rho$ with
$Q_\rho := (\rho,1)\times(0,\Theta)$ is a bi-Lipschitz map, i.e. there exist
two constants $C_1,C_2 > 0$ such that
\[
C_1 \left|(r_1,\theta_1) - (r_2,\theta_2) \right| \leq
\left|F(r_1,\theta_1) - F(r_2,\theta_2) \right| \leq
C_2 \left| (r_1,\theta_1) - (r_2,\theta_2) \right|
\]
holds for all $(r_1,\theta_1),(r_2,\theta_2) \in Q_\rho$. Indeed, by the mean
value theorem we have
\[
\left| F(r_1,\theta_1) - F(r_2,\theta_2) \right| \leq
\norm{\nabla F} \left| (r_1,\theta_1) - (r_2,\theta_2) \right|
\]
and clearly $\norm{\nabla F}_\infty \leq 1$. In the same way, $F^{-1} :
\Omega_\rho \to Q_\rho$ is a smooth, bijective map and $\norm{F^{-1}}_\infty
\leq 1/\rho$; hence it is Lipschitz continuous and it follows that $F$ and
$F^{-1}$ are bi-Lipschitz when restricted to $Q_\rho$ and $\Omega_\rho$
respectively. As a result of \cite[Theorem 2.2.2.]{ziemer1989}, $u =
\Phi(\hat{u})$ is weakly differentiable on $Q_\rho$ and the chain rule holds.
For $n\in\NN$, define $u_n = \Phi|_{\Omega_{1/n}}(\hat{u})$ on $Q$ by
extending $\hat u$ by zero outside $\Omega_\rho$. For any $0<\rho<1$ one has
that
\begin{equation}
\label{eq:uniform_bound}
\norm{u}_{\tilde{H}^1(Q_\rho)} = \norm{\hat{u}}_{H^1(\Omega_\rho)}
\leq \norm{\hat{u}}_{H^1(\Omega)},
\end{equation}
so $u_n$ and its derivatives belong to the associated weighted Lebesgue
spaces. As a result of the monotone convergence theorem we have that
$u\in\htilde$ and
\[
\norm{u}_{\htilde} = \norm{\hat{u}}_{H^1(\Omega)}.
\]
Repeating the steps above, we can also show that for $u\in \htilde$ one has
$\Phi^{-1}(u)\in H^1(\Omega)$.
\end{proof}

It is then natural to define the following space in order to take into account
the boundary conditions
\[
\htildeo := \Phi(H^1_0(\Omega)).
\]

We are now ready to state the variational formulation of~\eqref{eq:poisson} in
both coordinate systems.

\begin{definition}[Weak form of the Poisson problem in Cartesian coordinates]

Find $\hat u\in H^1_0(\Omega)$ such that
\begin{equation}
\label{eq:poisson_weak}
\hat{a}(\hat{u}, \hat{v}) = \hat{b}(\hat{u}) \quad
\text{for all }\hat{v}\in H_0^1( \Omega),
\end{equation}
with
\[
\hat{a}(\hat{u},\hat{v}) =
\left( \hat{\nabla} \hat{u}, \hat{\nabla} \hat{v} \right)_{L^2( \Omega)},
\quad \hat{b}(\hat{u}) = \left(\hat{f}, \hat{v} \right)_{L^2( \Omega)}.
\]

\end{definition}

\begin{definition}[Weak form of the Poisson problem in polar coordinates]

Find $u\in\htildeo$ such that
\begin{equation}
\label{eq:poisson_polar}
a(u,v) = b(v) \quad \text{for all }v\in\htildeo,
\end{equation}
with
\[
\aligned
&a(u,v) = \int_0^1\int_0^{\Theta} \left(\partial_r u \partial_r v +
\frac{1}{r^2}\partial_\theta u \partial_\theta v \right)
r\,\mathrm{d}r\,\mathrm{d}\theta\\
&b(v) = \int_0^1\int_0^{\Theta} fv\, r\,\mathrm{d}r\,\mathrm{d}\theta.
\endaligned
\]

\end{definition}

It is well-known that~\eqref{eq:poisson_weak} is well posed and so
is~\eqref{eq:poisson_polar} thanks to the properties of the map $\Phi$ and of
the isometry shown above.

\section{The semi-discrete Poisson equation}
\label{se:semi-discrete}

The discretization of~\eqref{eq:poisson} with the scaled boundary finite
element method is based on a spatial semi-discretization that is described in
this section.

We introduce a partition of the parametrized boundary
$\theta\mapsto(\cos\theta,\sin\theta)$ given by
\[
\mathcal{T}_\Gamma = \left\{\theta_1,\dots,\theta_N\right\}
\]
and consider a finite dimensional approximation of $H^1(0,\Theta)$ generated
by a basis $\{e_i(\theta)\}_{i=1}^N$ with the property that
\[
e_i(\theta_j)=\delta_{ij}.
\]

\begin{remark}

The choice of $\{e_i(\theta)\}_{i=1}^N$ at this point is arbitrary. It could
be based on finite elements, splines, global Lagrange polynomials, etc.

Due to our choice of boundary conditions, we could also have defined the basis
$\{e_i(\theta)\}_{i=1}^N$ in $H^1_0(0,\Theta)$, but we prefer to avoid this in
order to allow our analysis to be extended more easily to more general boundary
conditions or to a situation where $\Theta=2\pi$.

\end{remark}

The main idea behind the semi-discretization is to consider families of
functions where the variables $r$ and $\theta$ are separated formally as
follows:
\[
u_s(r,\theta)=\sum_{i=1}^Nu_i(r)e_i(\theta).
\]
Ideally, we would like to have $u_i(r)=u(r,\theta_i)$ and this choice will be
used later in Subsection~\ref{se:numan} for the error analysis; it will
lead to the analogous of the interpolation operator for standard finite
elements.
In order to do so, we need to give sense to the radial trace $u(r,\theta_i)$.
For the sake of readability, we now introduce an abstract setting and we
postpone the actual definition of the involved functional spaces to
Subsection~\ref{se:numan}. Ultimately, we want to define a semi-discrete
space
\[
U^s := \left\{v_s\in\htilde:v_s = \sum_{i=1}^Nv_i(r)e_i(\theta)\text{ with }
v_i\in\tilde U \text{ for }1\leq i\leq N\right\},
\]
where $\tilde U$ is a suitable functional space on the interval $(0,1)$.
We will then consider its subspace $U^s_0=U^s\cap\htildeo$, so that the
semi-discretization of problem~\eqref{eq:poisson_polar} will read: find
$u_s\in U^s_0$ such that
\begin{equation}
\label{eq:poisson_polar_sd}
a(u_s,v_s) = b(v_s)\quad \text{for all }v_s\in U^s_0.
\end{equation}

We will prove (see Theorem~\ref{thm:completeness_vs}) that $U^s_0$ is a closed
subspace of $\htildeo$ so that problem~\eqref{eq:poisson_polar_sd} is uniquely
solvable and the error between $u$ and $u_s$ is bounded as usual by the best
approximation by C\'ea's lemma:
\begin{equation}
\|u-u_s\|_{\htilde}\le C\inf_{v\in U^s_0}\|u-v\|_{\htilde}.
\label{eq:cea}
\end{equation}

The solution of problem~\eqref{eq:poisson_polar_sd} is actually computed by
solving a system of ordinary differential equations where the unknowns are the
coefficients $u_i(r)$ of $u_s(r,\theta)$. This procedure is detailed in
Section~\ref{se:sbfem}.

In order to show the convergence of this procedure, we need to estimate the
right-hand side of~\eqref{eq:cea}.

\subsection{Error estimates for the interpolation operator}
\label{se:numan}

We plan to construct an interpolation operator
$\interp$ with values in $U^s$ that, if applied to smooth functions, would act
as follows
\[
(\interp u)(r,\theta) = \sum_{i=1}^N u(r,\theta_i) e_i(\theta).
\]

Since we will work with Sobolev functions, it is useful to define an adequate
trace-like operator that we are going to call the ``radial trace operator''.
To this end, the following bound is required.

\begin{lemma}
\label{lemma:trace_ineq}

For all $u\in C^\infty(\overline{Q})$ and $0 \leq \vartheta \leq \Theta$ we
have 
\begin{equation}
\label{eq:trace_ineq}
\int_0^1 ru^2(r,\vartheta)\mathrm{d}r \leq C \left( \norm{u}^2_{L^2_r(Q)} +
\norm{\partial_\theta u}^2_{L^2_r(Q)} \right)
\end{equation}
where $C>0$ only depends on $\Theta$.

\end{lemma}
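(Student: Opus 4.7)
The inequality is essentially a classical one-dimensional trace-type (Sobolev embedding) estimate in the angular variable, applied pointwise in $r$ and then integrated against the weight $r$. Since the weight affects only the radial variable, which plays the role of a parameter here, it should slip through the argument without any real difficulty.

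My plan is to first establish the following auxiliary fact: for every $g\in C^\infty([0,\Theta])$ and every $\vartheta\in[0,\Theta]$,
\[
g(\vartheta)^2 \leq C(\Theta)\left(\int_0^\Theta g(\eta)^2\,\mathrm{d}\eta+\int_0^\Theta g'(\eta)^2\,\mathrm{d}\eta\right).
\]
The standard way to get this is to start from the identity $g(\vartheta)^2=g(\eta)^2+2\int_\eta^\vartheta g(s)g'(s)\,\mathrm{d}s$, average over $\eta\in(0,\Theta)$, bound the double integral by Cauchy--Schwarz, and close with Young's inequality $2ab\le a^2+b^2$. The resulting constant depends only on $\Theta$.

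Next I would apply this inequality pointwise in $r$ to the function $\theta\mapsto u(r,\theta)$, which is smooth on $[0,\Theta]$ for each fixed $r\in[0,1]$ since $u\in C^\infty(\overline{Q})$. This yields
\[
u(r,\vartheta)^2\le C(\Theta)\left(\int_0^\Theta u(r,\theta)^2\,\mathrm{d}\theta+\int_0^\Theta (\partial_\theta u(r,\theta))^2\,\mathrm{d}\theta\right)
\]
for every $r\in[0,1]$. Multiplying by $r$ and integrating in $r$ over $(0,1)$ produces exactly the weighted $L^2_r$ norms on the right-hand side and yields~\eqref{eq:trace_ineq}.

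I do not anticipate any real obstacle: the estimate is one-dimensional in nature and the weight $r$ is harmless because it is bounded on $(0,1)$ and is independent of the variable in which the trace is being taken. The only point where one should be a little careful is the justification of the pointwise argument, but this is immediate for $u\in C^\infty(\overline{Q})$, which is precisely the regularity assumed in the statement; the extension to less regular $u$ is not needed at this stage and would be obtained later by density.
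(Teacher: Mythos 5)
Your proposal is correct and follows essentially the same route as the paper: both arguments rest on the fundamental theorem of calculus in $\theta$, averaging over the free endpoint, and Cauchy--Schwarz together with Young's inequality. The only difference is cosmetic --- you isolate a weight-free one-dimensional trace inequality and then integrate it against the weight $r$, whereas the paper carries the factor $r$ along and integrates in $r$ before averaging in $\theta$; by Fubini these are the same computation.
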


\begin{proof}

Let $u\in C^\infty(\overline{Q})$ and assume, without loss of generality, that
$0 \leq \zeta < \Theta$. Then we have
\[
\int_\vartheta^\theta \partial_\zeta \left( u^2(r,\zeta) \right)
r\,\mathrm{d}\zeta = ru^2(r,\theta) - ru^2(r,\vartheta) \quad \text{for all }
\theta\in (\vartheta,\Theta].
\]
Reordering and integrating over $r$, we have
\[
\int_0^1 ru^2(r,\vartheta)\,\mathrm{d}r= \int_0^1 ru^2(r,\theta)\,\mathrm{d}r
-2\int_0^1 \int_\vartheta^\theta u(r,\zeta) \partial_\zeta u(r,\zeta)
r\,\mathrm{d}\zeta\,\mathrm{d}r.
\]
For the last term, we can apply H{\"o}lder's inequality, so that
\[
\aligned
- 2 \int_0^1 \int_\vartheta^\theta u(r,\zeta) \partial_\zeta u(r,\zeta)
r\,\mathrm{d}\zeta\,\mathrm{d}r  &\leq 2 \int_0^1 \int_0^{\Theta} \left\vert
u(r,\zeta) \partial_\zeta u(r,\zeta) \right\vert
r\,\mathrm{d}\zeta\,\mathrm{d}r
\\
& \leq 2\norm{u}_{L^2_r(Q)} \norm{\partial_\theta u}_{L^2_r(Q)} \\
& \leq \norm{u}^2_{L^2_r(Q)} + \norm{\partial_\theta u}^2_{L^2_r(Q)}.
\endaligned
\]
Finally, integrating over $\theta$, we have
\[
\Theta \int_0^1 ru^2(r,\vartheta)\,\mathrm{d}r \leq 2 \norm{u}^2_{L^2_r(Q)} +
\norm{\partial_\theta u}^2_{L^2_r(Q)},
\]
so that \eqref{eq:trace_ineq} holds for smooth functions.
\end{proof}

The following space on the interval $(0,1)$ will be used for the definition of
$\tilde U$
\[
H^1_r(0,1) = \left\{ u\in L^2_r(0,1) :
\int_0^1 \left( u'(r) \right)^2 r\,\mathrm{d}r < \infty \right\}.
\]
Given an angle $0\leq\vartheta\leq\Theta$, inequality ~\eqref{eq:trace_ineq}
shows that the natural norm for a space $U$ where the radial trace operator
can be defined, is
\[
\norm{u}_{U} := \left( \norm{u}^2_{\htilde} +
\norm{\partial_{r\theta} u}_{L^2_r( Q)}^2 \right)^{\frac{1}{2}}.
\]
It is apparent that not all functions in $C^\infty(\overline{Q})$ have
a bounded $U$-norm because in general $C^\infty(\overline{Q})$ is not included
in $\htilde$. This is due to the fact that
$\norm{\partial_\theta u}_{L^2_{1/r}( Q)}$ might not be bounded for some
$u\in C^\infty(\overline{Q})$. Hence, we define
\[
\tilde{\gamma}_\vartheta :
C^\infty(\overline{Q})\cap\htilde\to H^1_r(0,1)\qquad
u \mapsto u(\cdot,\vartheta)
\]
and extend it to the closure of $C^\infty(\overline{Q})\cap\htilde$ with
respect to the $U$-norm.
We denote by $U\subset\htilde$ this space and by $\gamma_\vartheta$ the
extension of the trace operator, so that we have a bounded radial trace
operator
\[
\gamma_\vartheta : U \to H_r^1(0,1)
\]
that extends the restriction operator $\tilde{\gamma}_\vartheta$ defined on
smooth enough functions.

It is then natural to choose $\tilde U=H^1_r(0,1)$, so that the definition of
$U^s$ reads as follows
\[
U^s := \left\{v_s\in\htilde:v_s = \sum_{i=1}^Nv_i(r)e_i(\theta)\text{ with }
v_i\in H^1_r(0,1) \text{ for }1\leq i\leq N\right\}.
\]

\begin{theorem}
\label{thm:completeness_vs}
The space of semi-discrete functions $U^s$ is a closed subspace of $\htilde$.
\end{theorem}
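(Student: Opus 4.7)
The plan is to exploit that $\{e_i\}_{i=1}^N$ spans a finite-dimensional (and hence closed) subspace of $L^2(0,\Theta)$, so that coefficients can be recovered continuously from the sum. Concretely, I would take a sequence $\{v_s^{(n)}\}\subset U^s$ converging in $\htilde$ to some $w\in\htilde$, write $v_s^{(n)}=\sum_{i=1}^N v_i^{(n)}(r)\,e_i(\theta)$ with $v_i^{(n)}\in H^1_r(0,1)$, and aim to produce limits $v_i\in H^1_r(0,1)$ for which $w=\sum_i v_i e_i$.

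The central tool is the Gram matrix $G\in\RR^{N\times N}$ defined by $G_{ij}=\int_0^\Theta e_i e_j\,\mathrm{d}\theta$, which is symmetric positive definite since $\{e_i\}$ is linearly independent. Let $0<\lambda\le\Lambda$ denote its smallest and largest eigenvalues. For every $a\in\RR^N$ we then have
\[
\lambda\sum_{i=1}^N a_i^2 \;\le\; \int_0^\Theta \Bigl|\sum_{i=1}^N a_i\,e_i(\theta)\Bigr|^2 \mathrm{d}\theta \;\le\; \Lambda\sum_{i=1}^N a_i^2.
\]
Applied pointwise in $r$, first with $a_i=v_i(r)$ and then with $a_i=v_i'(r)$, multiplying by $r$ and integrating gives the two-sided bound
\[
\lambda\sum_{i=1}^N \norm{v_i}_{H^1_r(0,1)}^2 \;\le\; \norm{v_s}_{\ltwor}^2 + \norm{\partial_r v_s}_{\ltwor}^2 \;\le\; \Lambda\sum_{i=1}^N \norm{v_i}_{H^1_r(0,1)}^2
\]
valid for every $v_s=\sum_i v_i e_i\in U^s$; the middle expression is dominated by $\norm{v_s}_{\htilde}^2$ since the remaining $\partial_\theta$ term is nonnegative.

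Applied to the differences $v_s^{(n)}-v_s^{(m)}$, the lower bound shows that if $\{v_s^{(n)}\}$ is Cauchy in $\htilde$ then each $\{v_i^{(n)}\}$ is Cauchy in $H^1_r(0,1)$; by completeness of $H^1_r(0,1)$ there exist limits $v_i\in H^1_r(0,1)$. Setting $\tilde w:=\sum_i v_i e_i$, the upper bound yields $v_s^{(n)}\to\tilde w$ in $\ltwor$, while by hypothesis $v_s^{(n)}\to w$ in $\ltwor$ as well; uniqueness of the $\ltwor$-limit identifies $w=\tilde w$ almost everywhere. Since $w\in\htilde$ by assumption and has the prescribed form, $w\in U^s$. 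I do not foresee a substantive obstacle: the argument is a standard coefficient-extraction through the Gram matrix, with the radial weight $r$ playing only a passive role in the integration.
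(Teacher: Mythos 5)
Your argument is correct, but it takes a genuinely different route from the paper. The paper extracts the coefficients through the radial trace: since $e_i(\theta_j)=\delta_{ij}$, the $i$-th coefficient of $u_n$ is the nodal trace $u_n(\cdot,\theta_i)$, and Lemma~\ref{lemma:trace_ineq} bounds $\int_0^1 r\,|u_m(r,\theta_i)-u_n(r,\theta_i)|^2\,\mathrm{d}r$ by $C\norm{u_m-u_n}^2_{\htilde}$, so the coefficients are Cauchy in $L^2_r(0,1)$; the rest of the identification proceeds as in your last step. You instead use the Gram matrix of $\{e_i\}$ in $L^2(0,\Theta)$, applied pointwise in $r$ to both $v_i(r)$ and $v_i'(r)$ before integrating against the weight $r$. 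Your route is more elementary (linear algebra plus Fubini, no trace lemma) and does not rely on the nodal property $e_i(\theta_j)=\delta_{ij}$, so it works for any linearly independent system. It also has a concrete advantage: the lower Gram bound applied to the $\partial_r$ term gives Cauchyness of the coefficients in the full $H^1_r(0,1)$ norm, so the limit coefficients visibly lie in $H^1_r(0,1)$ as the definition of $U^s$ requires; the paper's trace argument only delivers $L^2_r(0,1)$ convergence of the coefficients and leaves the membership $u_i\in H^1_r(0,1)$ implicit. What the paper's approach buys in exchange is conceptual economy: it reuses the radial trace machinery that is needed anyway to define the interpolation operator, and it identifies the limit coefficients as genuine radial traces of the limit function, which is the viewpoint used throughout Section~\ref{se:numan}. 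One small point to make explicit in your write-up: you should note that $\partial_r v_s^{(n)}=\sum_i (v_i^{(n)})'\,e_i$ as a weak derivative, so that the Gram bound really applies to the radial derivative term.
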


\begin{proof}
Let $(u_n)$ be a Cauchy sequence in $U^s$. By completeness, there is a
function $\tilde{u}$ such that $u_n\to \tilde{u}$ in $\htilde$. By Lemma
\ref{lemma:trace_ineq} we have
\[
\int_0^1 r\left|u_m(r,\theta_i) - u_n(r,\theta_i)\right|^2\,\mathrm{d}r\leq
C \norm{u_m - u_n}^2_{\htilde} \to 0
\]
as $n,m \to \infty$, so $(u_n(\cdot,\theta_i))$ is a Cauchy sequence in
$L_r^2(0,1)$ and by completeness there is a limit $u_n(\cdot,\theta_i) \to
u_i$ for each $i$.
Define $u = \sum_{i = 1}^N u_ie_i$ in $U^s$ and note that
\[
\aligned
\norm{u - \tilde{u}}_{L^2_r( Q)} &\leq \norm{u - u_n}_{L^2_r( Q)} +
\norm{\tilde{u} - u_n}_{L^2_r( Q)} \\
&\leq C \left( \sum_{i=1}^N \norm{u_i(\cdot) -
u_n(\cdot,\theta_i)}^2_{L^2_r(0,1)} \right)^{\frac{1}{2}} + \norm{\tilde{u} -
u_n}_{L^2_r( Q)} \to 0
\endaligned
\]
as $n\to \infty$, so $u = \tilde{u}$ and therefore $u_n\to u$ in $\htilde$.
\end{proof}

Given $u\in U$ we can then define the interpolant as
\[
(\interp u)(r,\theta) = \sum_{i=1}^{N} u_i(r) e_i(\theta),
\]
where $u_i(r)$ is defined as $\gamma_{\theta_i}(u)$ in $H^1_r(0,1)$. In order
to see that the interpolant is well defined, we need to show that $\interp u$
belongs to $\htilde$. To limit the technicalities, from now on in this section
we are assuming that $\{e_i\}$ is the basis of continuous piecewise linear
finite elements on $(0,\Theta)$. The general case can be handled with similar
arguments.

\begin{lemma}
For $u\in U$, we have $\Pi u \in \htilde$.
\end{lemma}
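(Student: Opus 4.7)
The plan is to verify the three norms that define $\tilde{H}^1(Q)$ for $\Pi u = \sum_{i=1}^N u_i(r) e_i(\theta)$, with $u_i := \gamma_{\theta_i}(u)\in H^1_r(0,1)$. For the two ``radial'' contributions $\|\Pi u\|_{L^2_r(Q)}$ and $\|\partial_r(\Pi u)\|_{L^2_r(Q)}$, I would argue directly: since the hat functions $e_i$ are uniformly bounded on $(0,\Theta)$ and each $u_i$ (resp.~$u_i'$) lies in $L^2_r(0,1)$, one gets bounds of the form
\[
\|\Pi u\|_{L^2_r(Q)}^2 + \|\partial_r(\Pi u)\|_{L^2_r(Q)}^2 \le C\sum_{i=1}^N \|u_i\|_{H^1_r(0,1)}^2,
\]
which is finite because $u\in U$ makes each radial trace $u_i$ belong to $H^1_r(0,1)$.

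The main work is the angular derivative in the $1/r$-weighted norm. On each subinterval $(\theta_i,\theta_{i+1})$, linearity of the hat functions gives
\[
\partial_\theta (\Pi u)(r,\theta) = \frac{u_{i+1}(r)-u_i(r)}{h_i},\qquad h_i=\theta_{i+1}-\theta_i,
\]
which is independent of $\theta$. For smooth $u\in C^\infty(\overline{Q})\cap\tilde H^1(Q)$ we have literally $u_i(r)=u(r,\theta_i)$, so the fundamental theorem of calculus yields $u_{i+1}(r)-u_i(r) = \int_{\theta_i}^{\theta_{i+1}} \partial_\theta u(r,\theta)\,\mathrm{d}\theta$ and, by Cauchy--Schwarz,
\[
\bigl(u_{i+1}(r)-u_i(r)\bigr)^2 \le h_i \int_{\theta_i}^{\theta_{i+1}} (\partial_\theta u(r,\theta))^2\,\mathrm{d}\theta.
\]
Integrating against $1/r$ and summing over $i$ would give the key stability bound
\[
\|\partial_\theta(\Pi u)\|_{L^2_{1/r}(Q)}^2 \le \|\partial_\theta u\|_{L^2_{1/r}(Q)}^2.
\]

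Combined with the two radial estimates, this produces an overall stability inequality $\|\Pi u\|_{\tilde H^1(Q)} \le C\|u\|_U$ for every $u\in C^\infty(\overline{Q})\cap \tilde H^1(Q)$. I would then extend to arbitrary $u\in U$ by density: pick a sequence $u^n\in C^\infty(\overline{Q})\cap\tilde H^1(Q)$ converging to $u$ in the $U$-norm; by continuity of $\gamma_{\theta_i}:U\to H^1_r(0,1)$, the coefficients $\gamma_{\theta_i}(u^n)$ converge to $u_i$ in $H^1_r(0,1)$; applying the stability inequality to $u^n-u^m$ shows that $(\Pi u^n)$ is Cauchy in $\tilde H^1(Q)$; by completeness its limit agrees with $\Pi u$ a.e., hence $\Pi u\in\tilde H^1(Q)$.

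The step I expect to be most delicate is the angular derivative bound for non-smooth $u\in U$: the identity $u_{i+1}(r)-u_i(r)=\int_{\theta_i}^{\theta_{i+1}}\partial_\theta u\,\mathrm{d}\theta$ is not literal when $u_i$ is defined only through the extended trace operator, so the argument must go through smooth approximants and then pass to the limit using continuity of $\gamma_{\theta_i}$ and convergence of $\partial_\theta u^n$ in $L^2_{1/r}(Q)$. Everything else is routine bookkeeping.
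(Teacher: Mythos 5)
Your proposal is correct and follows essentially the same route as the paper: bound the two $L^2_r$-weighted radial terms via the boundedness of the radial traces $u_i\in H^1_r(0,1)$ (the paper makes this quantitative with Lemma~\ref{lemma:trace_ineq}), and control $\|\partial_\theta(\Pi u)\|_{L^2_{1/r}(Q)}$ by noting that on each subinterval the piecewise-linear interpolant's angular derivative is the average of $\partial_\theta u$, then apply Cauchy--Schwarz to get the stability bound $\|\partial_\theta(\Pi u)\|_{L^2_{1/r}(Q)}\le\|\partial_\theta u\|_{L^2_{1/r}(Q)}$. Your explicit density argument for passing from smooth functions to general $u\in U$ is a welcome extra precision that the paper leaves implicit.
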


\begin{proof}
We have that $\Pi u\in\htilde$ if and only if
\begin{equation}
\label{eq:h1tilde_conditions}
\int_0^1\int_0^{2\pi} \left( r(\Pi u)^2 + r(\partial_r \Pi u)^2 +
\frac{1}{r}(\partial_\theta \Pi u)^2 \right)\,\mathrm{d}\theta\,\mathrm{d}r
< \infty.
\end{equation}
We apply Lemma~\ref{lemma:trace_ineq} and obtain
\begin{equation}
\label{eq:ineq_dr}
\aligned
\norm{\Pi u}^2_{L^2_r( Q)} + \norm{\partial_r \Pi u}^2_{L^2_r( Q)}
& \leq N \sum_{i=1}^N \Big( \int_0^1 ru_i^2(r)\,\mathrm{d}r
\int_0^{2\pi} e^2_i(\theta)\,\mathrm{d}\theta\\
&\quad+ \int_0^1 r \left(\partial_ru_i(r) \right)^2\,\mathrm{d}r
\int_0^{2\pi} e^2_i(\theta)\,\mathrm{d}\theta \Big) \\
&\leq CN\left(\norm{u}^2_{\htilde}+\norm{\partial_{r\theta}u}^2_{L^2_r(Q)}
\right)\sum_{i=1}^N\int_0^{2\pi}e^2_i(\theta)\,\mathrm{d}\theta\\
&\leq\frac{4\pi^2 Ch}{3h^2_{\min}}\left(\norm{u}^2_{\htilde}
+\norm{\partial_{r\theta}u}^2_{L^2_r( Q)}\right),
\endaligned
\end{equation}
where $h=\max_i(\theta_{i+1}-\theta_i)$ and 
$h_{\min}=\min_i(\theta_{i+1}-\theta_i)$.

For the third term in~\eqref{eq:h1tilde_conditions}, we fix $r\in(0,1)$ and
observe that
\[
u(r,\theta_i)=(\Pi u)(r,\theta_i),\quad u(r,\theta_{i+1})=
(\Pi u)(r,\theta_{i+1})
\]
for $i = 1,2,\dots,N-1$. Taking into account that $\partial_\theta \Pi u$ is
well defined in $(\theta_i, \theta_{i+1})$, we apply the mean value theorem
and 
\[
u(r,\theta_{i+1})-u(r,\theta_i)=\left(\theta_{i+1}-\theta_i \right)
\left(\partial_\theta \Pi u(r,\tilde{\theta})\right)
\]
holds for some $\tilde{\theta}\in(\theta_i,\theta_{i+1})$.
Since $\Pi u$ is linear in this interval, the following equality holds
\[
\left\lvert\partial_\theta\Pi u(r,\theta)\right\lvert^2
\left(\theta_{i+1}-\theta_i\right)^2=
\left\lvert\int_{\theta_i}^{\theta_{i+1}}\partial_\theta u(r,\zeta)
\,\mathrm{d}\zeta\right\lvert^2
\]
for all $\theta\in(\theta_i,\theta_{i+1})$ and $r\in(0,1)$.
After multiplying by $1/r$, integrating, and applying H\"older's inequality,
we have
\[
\int_0^1\int_{\theta_i}^{\theta_{i+1}}\frac{1}{r}
\left\lvert\partial_\theta \Pi u (r,\theta)\right\lvert^2
\,\mathrm{d}\theta\,\mathrm{d}r\le
\int_0^1\int_{\theta_i}^{\theta_{i+1}}\frac{1}{r}
\left\lvert\partial_\theta u(r,\theta)\right\lvert^2
\,\mathrm{d}\theta\,\mathrm{d}r.
\]
By integrating over each interval and summing up the terms, we have
\begin{equation}
\label{eq:ineq_dtheta}
\norm{\partial_\theta\Pi u}_{L^2_{1/r}(Q)}\le
\norm{\partial_\theta u}_{L^2_{1/r}(Q)}.
\end{equation}
Finally, putting~\eqref{eq:ineq_dr} and~\eqref{eq:ineq_dtheta} together, we
get
\[
\norm{\Pi u}_{\htilde}^2\le\max\left\{\frac{4\pi^2 Ch}{3h^2_{\min}},1\right\}
\left(2\norm{u}^2_{\htilde}+\norm{\partial_{r\theta}u}^2_{L^2_r(Q)}\right)
<\infty.
\]
\end{proof}

In the next theorem we prove the approximation properties of $U^s$. As usual,
we need to assume suitable regularity that will be characterized by the
following space
\[
U'=\left\{u\in U:\norm{\partial_{\theta\theta}u}_{L^2_{1/r}(Q)}<\infty\right\}.
\]

\begin{remark}
The space $U'$ requires extra regularity only in the angular variable
$\theta$. We will see in Section~\ref{se:num} that singular solutions (with
respect to the Cartesian coordinates) can be in $U'$ and be approximated
optimally by SBFEM.
\end{remark}

\begin{theorem}
Let $u$ be in $U'$. Then there exists $C>0$ independent of
$\mathcal{T}_\Gamma$ such that
\[
\norm{u-\Pi u}_{L^2_r(Q)}\le h^2\norm{\partial_{\theta\theta}u}_{L_{1/r}^2(Q)}
\]
and
\[
\norm{u-\Pi u}_{\htilde}\le Ch\left(\norm{\partial_{r\theta}u}^2_{L^2_r(Q)}
+\norm{\partial_{\theta\theta}u}^2_{L^2_{1/r}(Q)} \right).
\]
\label{th:conv}
\end{theorem}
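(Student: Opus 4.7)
The plan is to reduce both bounds to standard one-dimensional finite element interpolation estimates applied fibrewise (at each fixed $r$) in the angular variable, and then integrate in $r$ against the appropriate weight. Since $\{e_i\}$ is the piecewise linear $P^1$ basis on the partition $\{\theta_i\}$ of $(0,\Theta)$ and $(\Pi u)(r,\cdot)$ is by construction the nodal linear interpolant of $u(r,\cdot)$, the whole argument sits on top of the well-known 1D bounds
\[
\norm{v-Iv}_{L^2(\theta_i,\theta_{i+1})}\le C h_i^2\,\norm{v''}_{L^2(\theta_i,\theta_{i+1})},\qquad
\norm{v'-(Iv)'}_{L^2(\theta_i,\theta_{i+1})}\le C h_i\,\norm{v''}_{L^2(\theta_i,\theta_{i+1})}.
\]

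For the $L^2_r(Q)$ estimate, I would first fix $r\in(0,1)$, apply the first 1D bound on each $(\theta_i,\theta_{i+1})$, and sum over $i$ to obtain
\[
\int_0^\Theta |u(r,\theta)-\Pi u(r,\theta)|^2\,\mathrm{d}\theta\le C h^4 \int_0^\Theta |\partial_{\theta\theta}u(r,\theta)|^2\,\mathrm{d}\theta.
\]
Multiplying by $r$ and integrating in $r$ then gives an $O(h^2)$ bound in terms of $\norm{\partial_{\theta\theta}u}_{L^2_r(Q)}$, and the embedding $\norm{\cdot}_{L^2_r(Q)}\le\norm{\cdot}_{L^2_{1/r}(Q)}$ established earlier replaces the weight on the right-hand side by $1/r$ to match the statement.

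For the $\htilde$ estimate I would split $\norm{u-\Pi u}_{\htilde}^2$ into its three natural pieces. The $L^2_r(Q)$ piece is already $O(h^2)$ by the previous step, so it is harmless. For the $\partial_\theta$ piece, I apply the 1D derivative bound fibrewise at each $r$, multiply by $1/r$, and integrate to get an $O(h)$ bound by $\norm{\partial_{\theta\theta}u}_{L^2_{1/r}(Q)}$. For the $\partial_r$ piece, the key observation is that $\partial_r$ commutes with $\Pi$: since the $e_i$ depend only on $\theta$ and the coefficients $u_i=\gamma_{\theta_i}(u)\in H^1_r(0,1)$ satisfy $\partial_r u_i=\gamma_{\theta_i}(\partial_r u)$, one has $\partial_r(u-\Pi u)=\partial_r u-\Pi(\partial_r u)$. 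Applying the 1D derivative bound to $\partial_r u$ viewed as a function of $\theta$, multiplying by $r$, and integrating produces an $O(h)$ bound in terms of $\norm{\partial_{r\theta}u}_{L^2_r(Q)}$. Summing the three contributions and taking a square root yields the claimed estimate.

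The main obstacle I anticipate is the rigorous justification of the commutation $\partial_r\Pi u=\Pi\partial_r u$, because $\gamma_\theta$ is defined by density on $U$ rather than pointwise on smooth functions. I would handle this by taking a sequence $u^{(n)}\in C^\infty(\overline{Q})\cap\htilde$ converging to $u$ in the $U$-norm, using the boundedness of $\gamma_{\theta_i}$ into $H^1_r(0,1)$ to pass $\partial_r u_i^{(n)}\to\partial_r u_i$ and identifying the limit with $\gamma_{\theta_i}(\partial_r u)$. Once this is settled, the rest is careful bookkeeping with the weights $r$ and $1/r$, and everything reduces to classical 1D $P^1$ interpolation theory.
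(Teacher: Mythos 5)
Your proposal is correct and follows essentially the same route as the paper: the paper's proof is precisely the fibrewise one-dimensional $P^1$ interpolation argument (derived by hand on each subinterval via Rolle's theorem, the fundamental theorem of calculus and H\"older, then integrated against the weights $r$ and $1/r$, with the swap $r\le 1/r$ on $(0,1)$ and a final density argument from $C^\infty(\overline{Q})$ to $U'$), including the identity $\partial_r(u-\Pi u)=\partial_r u-\Pi(\partial_r u)$ combined with the stability bound $\norm{\partial_{r\theta}\Pi u}_{L^2_r(Q)}\le\norm{\partial_{r\theta}u}_{L^2_r(Q)}$. Just note that for the $\partial_r$ piece you need the first-order estimate $\norm{v-Iv}_{L^2}\le Ch\norm{v'}_{L^2}$ applied to $v=\partial_r u(r,\cdot)$ (not the derivative bound $\norm{v'-(Iv)'}\le Ch\norm{v''}$, which would require $\partial_{r\theta\theta}u$ and more regularity than $U'$ provides), which is exactly what the paper's computation $\partial_r\varepsilon(r,\vartheta)=\int_{\theta_i}^\vartheta\partial_{r\theta}\varepsilon\,\mathrm{d}\zeta$ delivers.
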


\begin{proof}

Let $u\in C^\infty(\overline{Q})$ and define
$\varepsilon(r,\theta)=(u-\Pi u)(r,\theta)$.
Due to the properties of
the interpolation operator, we have $\varepsilon(r,\theta_i) = 0$ for $i =
1,...,N$. As a result, there is a $\tilde{\theta}_i\in(\theta_i,\theta_{i+1})$ such
that $\partial_\theta \varepsilon(r,\tilde{\theta}_i) = 0$. It follows that
\[
\partial_\theta\varepsilon(r,\vartheta)=
\int_{\tilde{\theta}_i}^\vartheta\partial_{\theta\theta}\varepsilon(r,\zeta)
\,\mathrm{d}\zeta=
\int_{\tilde{\theta}_i}^\vartheta\partial_{\theta\theta}u(r,\zeta)\,\mathrm{d}\zeta
\quad\text{for }\tilde{\theta}_i<\vartheta\le\theta_{i+1},
\]
since $\Pi u$ is linear in $(\theta_i,\theta_{i+1})$ in the $\theta$ direction.
Applying H\"older's inequality we have
\[
\left\lvert\partial_\theta\varepsilon(r,\vartheta)\right\rvert^2\le
\left(\theta_{i+1}-\theta_i\right)\int_{\theta_i}^{\theta_{i+1}}
\left\lvert\partial_{\theta\theta}u(r,\theta)\right\rvert^2\,\mathrm{d}\theta.
\]
Integrating over the domain and summing up the different terms corresponding
to each interval $(\theta_i,\theta_{i+1})$ we have
\begin{equation}
\label{eq:error_h1semi}
\norm{\partial_\theta\varepsilon}^2_{L_{1/r}(Q)}\le h^2
\norm{\partial_{\theta\theta}u}^2_{L_{1/r}(Q)}.
\end{equation}
Since both $||\cdot||_{L_{1/r}( Q)}$ and $\Pi$ are continuous, inequality
\eqref{eq:error_h1semi} can be shown to hold for all $u\in U^\prime$ by a
density argument. Likewise, for $\varepsilon(r,\vartheta)$ we have
\[
\left\lvert\varepsilon(r,\vartheta)\right\rvert^2=
\left\lvert\int_{\theta_i}^\vartheta\partial_\theta\varepsilon(r,\zeta)
\,\mathrm{d}\zeta\right\rvert^2\le h
\int_{\theta_i}^{\theta_{i+1}}
\left\lvert\partial_\theta\varepsilon(r,\zeta)\right\rvert^2\,\mathrm{d}\zeta.
\]
After integrating and using \eqref{eq:error_h1semi}, we have
\begin{equation}
\label{eq:error_l2r}
\norm{\varepsilon}_{L^2_r(Q)}^2\le
h^4\norm{\partial_{\theta\theta}u}^2_{L^2_{1/r}(Q)}.
\end{equation}

Finally, an estimate must be found for
$\norm{\partial_r\varepsilon}_{L^2_r(Q)}$.
Once again, we consider a smooth function $u$ and take into account that
$\partial_r\varepsilon(r,\theta_i)=0$ for all $r\in(0,1)$ and
$i=1,\dots,N$. Hence
\[
\partial_r\varepsilon(r,\vartheta)=
\int_{\theta_i}^\vartheta\partial_{r\theta}\varepsilon(r,\zeta)
\,\mathrm{d}\zeta
\]
and it follows that
\begin{equation}
\label{eq:error_h1rsemi_v1}
\norm{\partial_r \varepsilon}^2_{L_r^2(Q)}\le
h^2\norm{\partial_{r\theta}\varepsilon}^2_{L^2_{1/r}(Q)}.
\end{equation}
In the same way as~\eqref{eq:ineq_dtheta} is obtained, we apply the mean value
theorem to the function $\partial_r (\Pi u)(r,\vartheta)$ for
$\vartheta\in(\theta_i,\theta_{i+1})$ and obtain
\[
\left(\partial_{r\theta}(\Pi u)(r,\vartheta)\right)
\left(\theta_{i+1}-\theta_i\right)=
\int_{\theta_i}^{\theta_{i+1}}\partial_{r\theta}u(r,\zeta)\,\mathrm{d}\zeta
\]
and therefore we can establish that
\begin{equation}
\label{eq:ineq_dtheta_dr}
\norm{\partial_{r\theta}\Pi u}_{L^2_r( Q)}\le
\norm{\partial_{r\theta} u}_{L^2_r(Q)}.
\end{equation}
Given~\eqref{eq:ineq_dtheta_dr}, we have the following error estimate for all
smooth functions $u$:
\begin{equation}
\label{eq:error_h1rsemi}
\aligned
\norm{\partial_r\varepsilon}^2_{L_r^2( Q)}
&\le 2h^2\left(\norm{\partial_{r\theta} u}^2_{L^2_r(Q)}
+\norm{\partial_{r\theta}\Pi u}^2_{L^2_r( Q)}\right)\\
&\le 4h^2\norm{\partial_{r\theta} u}^2_{L^2_r( Q)} 
\endaligned
\end{equation}
which, by a density argument, holds for all $u\in U'$. Therefore,
putting~\eqref{eq:error_h1semi},~\eqref{eq:error_l2r},
and~\eqref{eq:error_h1rsemi} together, we obtain the required bound. More
precisely, we have
\[
\norm{u-\Pi u}^2_{\htilde}\le
h^2\left(h^2+4\right)\left(\norm{\partial_{r\theta}u}^2_{L^2_r( Q)}
+\norm{\partial_{\theta\theta}u}^2_{L_{1/r}(Q)}\right).
\]
\end{proof}

\section{Constructing semi-discrete solutions with SBFEM}
\label{se:sbfem}

In order to solve our problem, the scaled boundary finite element method
rewrites the formulation~\eqref{eq:poisson_polar_sd} as a system of ordinary
differential equations; this is carried out in Section \ref{subsec:ode}. This is possible thanks to the representation of semi-discrete functions in $U^s_0$ as the product of $r$-dependent functions $u_i(r)$ with $\theta$-dependent test functions. The resulting system of differential equations is supplemented with additional conditions arising from regularity requirements and the boundary conditions. This system can be solved by an analytical method under certain conditions, a process which is described in Section \ref{subsec:analytical}.

\subsection{Rewriting the Poisson equation as an ODE}
\label{subsec:ode}

Let us consider $u_s = \sum_{i=1}^Nu_ie_i$ and $v_s = \sum_{i=1}^N v_ie_i$.
An immediate consequence of the definition of the space of semi-discrete
functions $U_0^s$ is that the bilinear form $a:U_0^s\times U_0^s \to \RR$ and
the linear form $b:U_0^s\to\RR$ may be rewritten as follows:
\[
\aligned
a(u_s,v_s) &= \sum_{i,j=1}^N \int_0^1\int_0^{\Theta} \left(u^{\prime}_i(r)
v^{\prime}_j(r) e_i(\theta) e_j(\theta) + \frac{1}{r^2} u_i(r)v_j(r)
e^{\prime}_i(\theta) e^{\prime}_j(\theta) \right)
r\,\mathrm{d}r\,\mathrm{d}\theta
\\
& = \sum_{i,j=1}^N \bigg( \int_0^1 u^{\prime}_i(r) v^{\prime}_j(r)r\,
\mathrm{d}r \int_0^{\Theta} e_i(\theta) e_j(\theta)\,\mathrm{d}\theta
\\
&\qquad +\int_0^1 u_i(r)v_j(r) \frac{\mathrm{d}r}{r}  \int_0^{\Theta}
e^{\prime}_i(\theta) e^{\prime}_j(\theta)\, \mathrm{d}\theta \bigg) \\
& = \sum_{i,j=1}^n \left( \mathrm{A}_{ij} \int_0^1 u^{\prime}_i(r)
v^{\prime}_j(r)r\, \mathrm{d}r + \mathrm{B}_{ij} \int_0^1u_i(r)v_j(r)
\frac{\mathrm{d}r}{r}   \right)
\endaligned
\]
and
\[
\aligned
b(v_s) &= \sum_{j=1}^N  \int_0^1\int_0^{\Theta} f(r,\theta)
v_j(r)e_j(\theta)r\,\mathrm{d}r\,\mathrm{d}\theta \\
&= \sum_{j=1}^N  \int_0^1 F_j(r) v_j(r)r\,\mathrm{d}r,
\endaligned
\]
where
\[
\aligned
&\mathrm{A}_{ij} = \int_0^{\Theta} e_i(\theta)
e_j(\theta)\mathrm{d}\theta\\
&\mathrm{B}_{ij} = \int_0^{\Theta}
e^{\prime}_i(\theta) e^{\prime}_j(\theta) \mathrm{d}\theta\\
&F_j(r) = \int_0^{\Theta} f(r,\theta)e_j(\theta)\,\mathrm{d}\theta.
\endaligned
\]

We now proceed formally with the derivation of the differential equation. To
this aim, we will use the following integration by parts formula
\[
\int^1_0 u'_i(r)v'_j(r)r\,\mathrm{d}r = -\int^1_0 u''_i(r)v_j(r)r\,\mathrm{d}r
- \int_0^1 u'_i(r) v_j(r) \mathrm{d}r + u'_i(1)v_j(1)
\]
which is clearly valid for smooth enough $u_i$ and $v_i$.

In order to simplify our notation we introduce a name for the space of the
coefficients in $U^s$ as follows
\[
\boldsymbol{U}_0^s=\left\{\bu=(u_i)_{i=1}^N:u_i\in H_r^1(0,1)\text{ for }
1\le i\le N,\  \sum_{i=1}^N u_ie_i\in U_0^s\right\}.
\]
It follows that if $u_s\in U_0^s$ is smooth enough, it solves $a(u_s,v_s) = b(v_s)$ for all
$v_s\in U_0^s$ if and only if
\begin{equation}
\label{eq:poisson_sbfem_var}
\aligned
&\sum_{j=1}^N\int_0^1 v_j(r)\sum_{i=1}^N
\left[-r\mathrm{A}_{ij}u''_i(r)-\mathrm{A}_{ij}u'_i(r)
+\frac{1}{r}\mathrm{B}_{ij}u_i(r)-F_j(r)r\right]\mathrm{d}r=0
\endaligned
\end{equation}
holds for all $\bv=(v_i)_{i=1}^N\in\boldsymbol{U}_0^s$. Moreover, as a result of the fundamental lemma of calculus of variations,~\eqref{eq:poisson_sbfem_var} holds if and only if
\begin{equation}
\label{eq:poisson_sbfem_ODE}
\sum_{j=1}^N\left(r\mathrm{A}_{ij}u''_j(r)+\mathrm{A}_{ij}u'_j(r) -\frac{1}{r}\mathrm{B}_{ij}u_j(r)\right)=rF_i(r)\,\text{for a.e. }r\in(0,1)
\end{equation}
is satisfied for all $i=2,\dots,N-1$, since $v_1 = v_N = 0$ by $\bv\in \boldsymbol{U}_0^s$. Because we seek a solution $\bu\in \boldsymbol{U}_0^s$, we must also enforce the boundary conditions $u_1 = u_N = 0$ and $u_i(1) = 0$ for $i = 2,\dots,N-1$. Furthermore, we impose the compatibility condition $u_i(0) = u_j(0)$ for all $i,j = 1,\dots,N$ to avoid a singularity at the center of the disk. In his, we guarantee that the function $u = \sum_{i = 1}^Nu_ie_i$ belongs to $\htildeo$.

Denote by $\mA$ and $\mB$ the matrices in $\RR^{N\times N}$ with components $A_{ij}$ and $B_{ij}$ and by $\bF(r)$ the vector function with components $F_i(r)$. We modify the columns and rows in $\mA$ and $\mB$ and we set $F_i(r) = 0$ for $i = 1$ and $N$ in order to enforce $u_1 = u_N = 0$. Then, a semi-discrete solution of the Poisson equation $\bu=(u_i)_{i=1}^N$ will satisfy the following conditions:
\begin{equation}
\label{eq:poisson_sbfem_ODE_complete}
\left\{
\aligned
&r^2\mA \bu'' + r\mA\bu' - \mB\bu = r^2\bF&&\text{for }r\in(0,1),\\
&\bu(1) = 0, \\
&u_i(0) = u_j(0) && \text{for all $i,j = 1,\dots,N$}.
\endaligned
\right.
\end{equation}
SBFEM provides a methodology for the construction of solutions for \eqref{eq:poisson_sbfem_ODE_complete}, see the next section. Whenever such solutions exist and satisfy the integration by parts formula for all $\bv\in \boldsymbol{U}_0^s$, then they correspond with the unique solution in $U^s_0$.

%

\subsection{Solving the ODE analytically}\label{subsec:analytical}

Following \cite{wolf2003}, a solution to \eqref{eq:poisson_sbfem_ODE_complete} can be constructed by finding a family of functions that satisfy the homogeneous ODE together with a particular
solution. In order to construct the homogeneous solution, we shall define the matrix $\mE\in\RR^{2N\times 2N}$ by
\[
\mE=\left(
\begin{matrix}
0 &\mA^{-1}\\
\mB & 0 
\end{matrix}
\right).
\]
Note that $\mE$ arises when the ODE in \eqref{eq:poisson_sbfem_ODE_complete} is rewritten as a first order differential equation by introducing the additional variable $\bq(r)=r\mA\bu'(r)$. If $(\lambda, (\bphi,\bpsi)^T)$ is an eigenpair of $\mE$, with $\bphi,\bpsi\in\CC^N$, we can see that
\begin{equation}
\label{eq:eigenvalues}
	\mB\bphi = \lambda^2 \mA \bphi.
\end{equation}
Furthermore, for $\bu(r) = r^{\lambda}\bphi$ we have that
\begin{equation}
\label{eq:homogeneous_ODE}
	r^2\mA\bu'' + r\mA\bu' - \mB\bu = 0
\end{equation}
for all $r>0$. That is, $\bu(r) = r^{\lambda}\bphi$ is a homogeneous solution of the ODE in \eqref{eq:poisson_sbfem_ODE_complete}. This idea can be extended in such a way that we get homogeneous solutions as a linear combination of $N$ linearly independent functions that satisfy \eqref{eq:homogeneous_ODE}. Indeed, first note that $\mE$ is a Hamiltonian matrix and therefore, for every eigenvalue $\lambda\in\CC$, we will also have that $-\lambda$, $\overline{\lambda}$ and $-\overline{\lambda}$ are eigenvalues of $\mE$. Consider  the following subset of $N$ eigenpairs of $\mE$:
\[
	\left\lbrace (\lambda_i, \left( \begin{array}{c} \bphi_i \\
	\bpsi_i
	\end{array} \right) ) : \R{(\lambda_i}) \geq 0,\quad \bphi_i,\bpsi_i\in\CC^N,\quad i =1,\dots,N \right\rbrace .
\]
From equation \eqref{eq:eigenvalues}, we see that the pair $(\lambda_i^2,\bphi_i)$ is an eigenpair of the one dimensional Laplace problem discretised with piecewise polynomials. As a result, $\lambda_i\in\RR$ and $\bphi_i\in\RR^N$. Moreover, the vectors $(\bphi_i)$ form a basis of $\RR^N$. 
Then, for any $N$-tuple of real numbers $c_1,\dots,c_N$, we have that
\[
	\bu_h(r) = \sum_{i = 1}^N c_i r^{\lambda_i} \bphi_i
\]
is a homogeneous solution to the ODE in \eqref{eq:poisson_sbfem_ODE_complete}. Given a particular solution $\bu_p$ for which 
\[
	r^2\mA\bu_p'' + r\mA\bu_p' - \mB\bu_p = r^2\bF \quad \text{on $(0,1)$},
\]
the constants $c_1,\dots c_N$ are then set by enforcing the boundary conditions $\bu_h(1) + \bu_p(1) = 0$. The resulting function $\bu = \bu_h + \bu_p$ is a solution of \eqref{eq:poisson_sbfem_ODE_complete}. 

\begin{remark}
	In the construction of $\bu$ we only consider non-negative eigenvalues because we require $u_i(r)\in H^1_r(0,1)$ for all $i = 1,\dots,N$. Moreover the condition $u_i(0) = u_j(0)$ for all $i,j = 1,\dots,N$ holds because if $\lambda_i = 0$ then $\bphi$ is in the kernel of $\mB$, which consists of vectors whose entries are all equal.
\end{remark}

The analytical construction of a particular solution relies on the form of $\bF(r)$. For example, if $\bF(r) = r^\alpha \boldsymbol{f}$, where $\boldsymbol{f}\in\RR^N$ and $\alpha\in\RR$, then it is simple to see that $\bu_p(r) = r^{\alpha + 2}\bphi_p$, with $\bphi_p\in\RR^N$ solving the linear system
\[	
	\left( (\alpha + 2)^2\mA - \mB \right) \bphi_p = \boldsymbol{f},
\]
is a particular solution of \eqref{eq:poisson_sbfem_ODE}.

\section{Numerical examples}
\label{se:num}

One of the main features of the method presented in the previous sections is
that it can achieve high order of convergence also in the presence of singular
solutions. We are going to show this behavior with two simple examples. The approximate solutions to \eqref{eq:poisson_sbfem_ODE} are obtained by means of the analytical method described in Section \ref{subsec:analytical}. In these tests, the basis functions $e_i$ in the angular direction are piecewise polynomials of order 1 or higher.

\subsection{A first numerical test}

We take $\Theta = 3\pi/2$ and consider the function
\[
u_e (r,\theta) = r^{\frac{2}{3}}\sin{\left(\frac{2}{3}\theta\right)}.
\]
This function satisfies $\Delta \hat{u}_e = 0$ on $\Omega$, where $\hat{u}_e = \Phi^{-1}(u_e)$, so it is a solution to
the homogeneous problem: find $\hat{u}\in H^1(\Omega)$ such that
\[
\aligned
-\Delta \hat{u} &= 0&&\text{in $\Omega$}\\
\hat{u} &= \hat{u}_e&&\text{on $\partial\Omega$}.
\endaligned
\]
Our theory can be easily extended to accommodate non-homogeneous boundary
conditions.
Moreover, it is well-known that $\hat{u}_e$ belongs to $H^1(\Omega)$ but not to
$H^2(\Omega)$. Indeed, this follows from the fact that the following
inequality holds for any $0<R<1$ and $0<\varepsilon<R$
\[
\aligned
\norm{\hat{u}_e}_{2,\Omega}^2 &\ge
\int_\varepsilon^R\int_0^{\frac{3}{2}\pi}
\left(\frac{\partial u_e}{\partial r}\right)^2r\,\mathrm{d}\theta\,\mathrm{d}r
=\left(\frac{4}{81}\int_0^{\frac{3}{2}\pi}\mathrm{sin}^2
\left(\frac{2}{3}\theta \right)\,\mathrm{d}\theta\right)
\int_\varepsilon^R r^{-\frac{5}{3}}\,\mathrm{d}r\\ 
&= C \left( \varepsilon^{-\frac{2}{3}} - R^{-\frac{2}{3}}\right)
\endaligned
\]
which tends to infinity as $\varepsilon$ goes to $0$. On the other hand, it can be easily seen that $u_e$ belongs to $U'$ and this
makes it possible to use the result of Theorem~\ref{th:conv} which implies, in
particular, that first order elements achieve second order of convergence in
$L^2$ even in presence of a corner singularity.

We report in Figure~\ref{fg:numtest} the results of our numerical test that
confirm our theoretical findings. We include also higher order approximations
(up to order six) and a convergence plot in $H^1$.


\begin{figure}
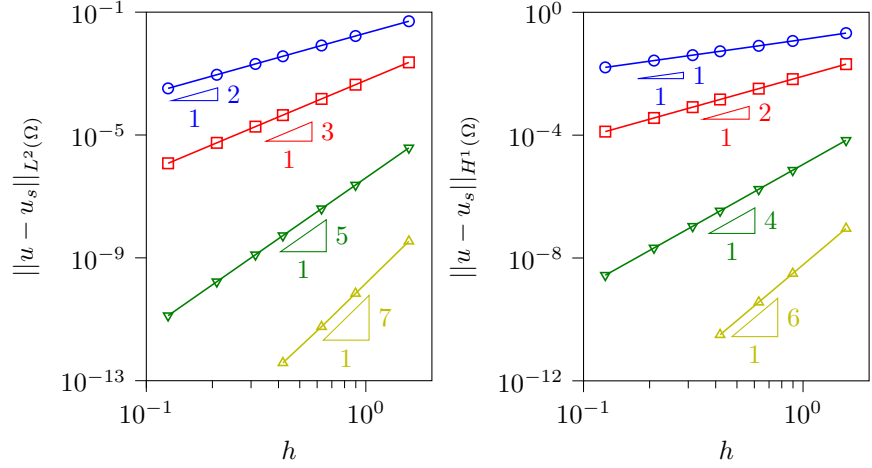

\includegraphics[width=0.45\linewidth,height=.5\linewidth]{L2_simple.tikz}
\includegraphics[width=0.45\linewidth,height=.5\linewidth]{H1_simple.tikz}
\caption{Convergence plots of $L^2$ and $H^1$-errors for the first numerical test with basis functions $e_i$ of polynomial order 1, 2, 4, and 6.}
\label{fg:numtest}
\end{figure}

\subsection{A second numerical test}

For the second numerical test, we consider a slightly more complicated example. Once again, we set $\Theta = 3\pi/2$ and consider the function
\[
	v_e(r,\theta) = r^{\frac{2}{3}}\left( \left(1 - \frac{4\theta}{3\pi} \right) \cos{(2\theta/3)} - \frac{4}{3\pi}\log{(r)}\sin{(2\theta/3)} \right).
\]
For this function we once again have that $\Delta \hat{v}_e = 0$, where $\hat{v}_e = \Phi^{-1}(v_e)$, and $v_e = r^{2/3}$ at $\theta = 0$ and $\theta = \Theta$. This function therefore is a solution to the problem: find $\hat{u}\in H^1(\Omega)$ such that
\[
\aligned
-\Delta \hat{u} &= 0 &&\text{in $\Omega$}\\
\hat{u} &= \hat{v}_e&&\text{on $\partial\Omega$}.
\endaligned
\]

The results to our numerical tests are presented in Figure \ref{fg:numtest2} for polynomial orders 1 and 2. The resulting convergence rates confirm the theoretical predictions from Section \ref{se:semi-discrete}.

\begin{figure}
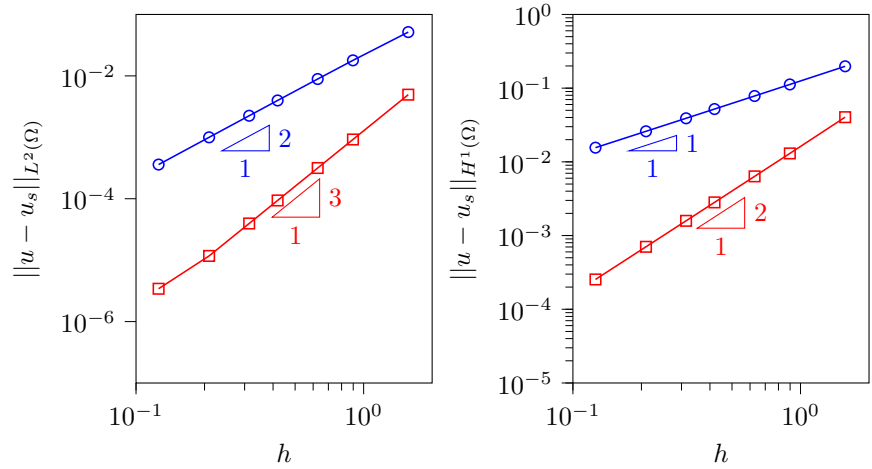

\includegraphics[width=0.45\linewidth,height=.5\linewidth]{L2_complex.tikz}
\includegraphics[width=0.45\linewidth,height=.5\linewidth]{H1_complex.tikz}
\caption{Convergence plots of $L^2$ and $H^1$-errors for the second numerical test with basis functions $e_i$ of polynomial order 1 and 2.}
\label{fg:numtest2}
\end{figure}

\bibliographystyle{amsplain}
\bibliography{ref}

\end{document}